\newcommand{\tluste}[1]{\mbox{\mathversion{bold}$ #1 $}}
\newcommand{\vr}[1]{{{#1}}}
\newcommand{\mace}[1]{{{#1}}}
\newcommand{\mna}[1]{{\mathcal{#1}}}
\newcommand{\omace}[1]{\mbox{$\overline{\mace{#1}}$}} 
\newcommand{\umace}[1]{\mbox{$\underline{\mace{#1}}$}} 
\newcommand{\imace}[1]{\mbox{$\tluste{#1}$}}
\def\Mid#1{#1^c}
\def\Rad#1{#1^\Delta}
\newcommand{\ovr}[1]{\mbox{$\overline{\vr{#1}}$}} 
\newcommand{\uvr}[1]{\mbox{$\underline{\vr{#1}}$}}
\newcommand{\onum}[1]{\mbox{$\overline{{#1}}$}} 
\newcommand{\unum}[1]{\mbox{$\underline{{#1}}$}}
\newcommand{\ivr}[1]{\mbox{$\tluste{#1}$}} 
\newcommand{\inum}[1]{\mbox{$\tluste{#1}$}}
\newcommand{\R}[0]{{\mathbb{R}}}
\newcommand{\IR}[0]{{\mathbb{IR}}}
\newcommand{\Ss}[0]{\mbox{\large$\Sigma$}}
\newcommand{\mmid}[0]{;\,}		
\newcommand{\seznam}[1]{{\{1, \ldots, {#1}\}}}
\newcommand{\st}[0]{{\ \ \mbox{subject to}\ \ }}
\DeclareMathOperator{\sgn}{sgn}	
\DeclareMathOperator{\diag}{diag}	
\def\nref#1{$(\ref{#1})$}
\newtheorem{proposition}{Proposition}
\theoremstyle{definition}
\newtheorem{example}{Example}
\begin{document}

\title{Robust optimal solutions in interval linear programming with forall-exists quantifiers}

\author{
  Milan Hlad\'{i}k\footnote{
Charles University, Faculty  of  Mathematics  and  Physics,
Department of Applied Mathematics, 
Malostransk\'e n\'am.~25, 11800, Prague, Czech Republic, 
e-mail: \texttt{milan.hladik@matfyz.cz}
}
}

\date{\today}
\maketitle

\begin{abstract}
We introduce a novel kind of robustness in linear programming. A solution $x^*$ is called robust optimal if for all realizations of objective functions coefficients and constraint matrix entries from given interval domains there are appropriate choices of the right-hand side entries from their interval domains such that  $x^*$ remains optimal. we propose a method to check for robustness of a given point, and also recommend how a suitable candidate can be found. We also discuss topological properties of the robust optimal solution set. We illustrate applicability of our concept in a transportation problem.
\end{abstract}


\section{Introduction}

Robustness in mathematical programming was intensively studied from diverse points of view \cite{BenBoy2006,BenNem2009,BenGor2004,BenNem2002, SoyMur2013}. Generally, robustness corresponds to stability of some key characteristics under limited input data change.
In case of uncertainties in the objective function only,  an optimal solution is usually called robust if the worst-case regret in the objective value is minimal.

One class of robustness is dealt with in the area of interval linear programming. Therein, we model uncertain parameters by intervals of admissible values and suppose that parameters can attain any value from their interval domains independently of other parameters. The effect of variations on the optimal value and interval solutions are the fundamental problems investigated 
\cite{AllNeh2013,Hla2012a,Hla2014:a}.
Concerning to robustness, \cite{Hla2014a} was devoted to stability of an optimal basis in  interval linear programming. In \cite{AveLeb2005,GabMur2010b,InuSak1995,MauLag1998}, the authors utilized maximum regret approach for finding robust solutions.
In multiobjective case, \cite{HlaSit2013,RivYag2013} studied robustness of a Parto optimal solution, and some  
specific nonlinear programming problems \cite{Hla2010c} were addressed in the context of interval robustness as well.

Recently, \cite{LiLuo2013,LuoLi2013a,LuoLi2014a} introduced a novel kind of interval robustness. They divided interval parameters into two sets, quantified respectively by universal and existential quantifiers.
Roughly speaking, an optimal solution is robust in this sense if for each realization of universally quantified interval parameter there is some realization of the existentially quantified parameters such that the solutions remains optimal. Such forall-exists quantified problems are also studied in the context of interval linear equations
\cite{Pop2012,PopHla2013,Sha2002}; imposing suitable quantifiers give us a more powerful technique in real-life problem modelling, and can more appropriately reflect various decision maker strategies.

This paper is a contribution to interval linear programming with quantified parameters. The robust optimal solutions considered must remain optimal for any admissible perturbation in the objective and matrix coefficients, compensated by suitable right-hand side change. We propose a method to check for this kind of robustness and present a cheap sufficient condition. We discuss properties of the set of all robust solutions, and propose a heuristic to find a robust solution.
We apply the robustness concept to transportation problem in a small numerical study. The equality form of linear programming is then extended to a general form with mixed equations and inequalities (Section~\ref{sGen}).

\paragraph*{Notation.} 
The $k$th row of a matrix $A$ is denoted as $A_{k*}$, and $\diag(s)$ stands for the diagonal matrix with entries given by $s$. The sign of a real $r$ is defined as $\sgn(r)=1$ if $r\geq0$ and $\sgn(r)=-1$ otherwise; for vectors the sign is meant entrywise.

An interval matrix is defined as
$$
\imace{A}:=\{A\in\R^{m\times n}\mmid \umace{A}\leq A\leq \omace{A}\},
$$
where $ \umace{A}$ and  $\omace{A}$, $\umace{A}\leq\omace{A}$, are given matrices. The midpoint and radius matrices are defined as
$$
\Mid{A}:=\frac{1}{2}(\umace{A}+\omace{A}),\quad
\Mid{A}:=\frac{1}{2}(\omace{A}-\umace{A}).
$$
Naturally, intervals and interval vectors are consider as special cases of interval matrices.
For interval arithmetic, we refer the readers to \cite{MooKea2009,Neu1990}, for instance.

\paragraph*{Problem formulation.} 
Consider a linear programming problem in the equality form
\begin{align}\label{lp}
\min c^Tx\st Ax=b,\ x\geq0.
\end{align}
Let $\imace{A}\in\IR^{m\times n}$, $\ivr{b}\in\IR^{m}$ and $\ivr{c}\in\IR^{n}$ be given. Let $x^*\in\R^n$ be a candidate robustly optimal solution. The problem states as follows:
\begin{quote}
For every $c\in\ivr{c}$ and $A\in\imace{A}$, does there exists $b\in\ivr{b}$ such that $x^*$ is optimal to \nref{lp}?
\end{quote}
In other words, we ask whether $x^*$ is robustly optimal in the sense that any change in $c$ and $A$ within the prescribed bounds can be compensated by an adequate change in $b$.
Thus, $\imace{A}$ and $\ivr{c}$ play role of uncertain parameters all realizations of which must be taken into account.
On the other hand, intervals in $\ivr{b}$ represent some reserves that we can utilize if necessary.

In \cite{Hla2012b}, it was shown that checking whether $x^*$ is optimal for all evaluations $c\in\ivr{c}$, with fixed $A$ and $b$, is a co-NP-complete problem. Since the class of problems studied in this manuscript covers this as a sub-class, we have as a consequence that our problem is co-NP-complete problem as well. This practically means that we hardly can hope for a polynomial time verification of robust optimality.

\section{Checking robust optimality}\label{sEqForm}

Let $I:=\{i=1,\dots,n\mmid x^*_i=0\}$ be the set of active indices of $x^*$. It is well known that $x^*$ is optimal if and only if $x^*$ is feasible, and there is no strictly better solution in the tangent cone at $x^*$ to the feasible set. In other words, the linear system
\begin{align}\label{optCond}
c^Tx=-1,\ \ Ax=0,\ \ x_i\geq0,\ i\in I,
\end{align}
has no solution. We refer to this conditions as \emph{feasibility} and \emph{optimality}. In order that $x^*$ is robustly optimal, both conditions must hold with the given forall-exists quantifiers. Notice that only the entries of $A$ are situated in both conditions. Since there is the universal quantifier associated with $A$, we can check for feasibility and optimality separately.

\paragraph*{Feasibility.}
We have to check that for any $A\in\imace{A}$ there is $b\in\ivr{b}$ such that $Ax^*=b$. This is well studied problem and $x^*$ satisfying this property is called tolerance (or tolerable) solution; see \cite{Fie2006,Pop2013a,Sha2002,Sha2004}. By \cite[Thm. 2.28]{Fie2006}, $x^*$ is a tolerance solution if and only if it satisfies
\begin{align}\label{optFeas}
|\Mid{A}x^*-\Mid{b}|+\Rad{A}|x^*|\leq\Rad{b}.
\end{align}
Thus, the feasibility question is easily answered.

\paragraph*{Optimality.} 
Denote by $A_I$ the restriction of $A$ to the columns indexed by $I$, and denote by $A_J$ the restriction to the columns indexed by  $J:=\seznam{n}\setminus I$. In a similar manner we use $I$ and $J$ as sub-indices for other matrices and vectors.

We want to check whether \nref{optCond} is infeasible for any  $A\in\imace{A}$ and $c\in\ivr{c}$. By \cite{Hla2013b}, this is equivalent to infeasibility of the system
\begin{align*}
(\uvr{c_I})^Tx_I+(\Mid{c_J})^Tx_J&\leq(\Rad{c_J})^T|x_J|-1,\\
-(\ovr{c_I})^Tx_I-(\Mid{c_J})^Tx_J&\leq(\Rad{c_J})^T|x_J|+1,\\
\uvr{A_I}\,x_I+\Mid{A_J}x_J&\leq\Rad{A_J}|x_J|,\\
-\ovr{A_I}\,x_I-\Mid{A_J}x_J&\leq\Rad{A_J}|x_J|,\\
x_I&\geq0.
\end{align*}
Due to the absolute values, the system is nonlinear in general, and it is the reason why checking robust optimality is co-NP-hard. Equivalently, this system is infeasible if and only if
\begin{subequations}\label{optAeExp}
\begin{align}
(\uvr{c_I})^Tx_I+(\Mid{c_J}-\Rad{c_J}\diag(s))^Tx_J&\leq-1,\\
-(\ovr{c_I})^Tx_I-(\Mid{c_J}+\Rad{c_J}\diag(s))^Tx_J&\leq1,\\
\uvr{A_I}\,x_I+(\Mid{A_J}-\Rad{A_J}\diag(s))x_J&\leq0,\\
-\ovr{A_I}\,x_I+(\Mid{A_J}+\Rad{A_J}\diag(s))x_J&\leq0,\\
x_I&\geq0
\end{align}
\end{subequations}
is infeasible for any sign vector $s\in\{\pm1\}^{|J|}$, where $|J|$ denotes the cardinality of $J$.

The system \nref{optAeExp} is linear, however, we have to verify infeasibility $2^{|J|}$ of instances. When $x^*$ is a basic feasible solution, then $|J|\leq m\leq n$. Thus, the number usually grows exponentially with respect to $m$, but not necessarily with respect to $n$. Therefore, we possibly can solve large problems provided the number of equations is low.

\subsection{Sufficient condition}\label{ssSufCond}

Since the number of systems \nref{optAeExp} can be very large, an easily computable sufficient condition for robust optimality is of interest.

Let us rewrite \nref{optCond} as
\begin{align}\label{optCond2}
c_I^Tx_I+c_J^Tx_J=-1,\ \ A_Ix_I+A_Jx_J=0,\ \ x_I\geq0.
\end{align}
According to the Farkas lemma \cite{Fie2006,Schr1998}, this system is infeasible if and only if the dual system
\begin{align}\label{optCondDual}
A_I^Tu \leq c_I,\ \ A_J^Tu = c_J
\end{align}
is feasible. Thus, in order that the optimality condition holds true, the linear system \nref{optCondDual} must be feasible for each $A\in\imace{A}$ and $c\in\ivr{c}$. 

If $x^*$ is a basic non-degenerate solution, then $A_J$ is square. If it is nonsingular in addition, then the system $A_J^Tu = c_J$ has a unique solution, and it suffices to check if the solution satisfies the remaining inequalities. Extending this idea to the interval case, consider the solution set defined as 
$$
\{u\in\R^m\mmid  \exists A_J\in\imace{A}_J\exists c_J\in\ivr{c}_J:
A_J^Tu = c_J\}.
$$
There are plenty of methods to find an interval enclosure (superset) $\ivr{u}$ of this solution set; see e.g.\ \cite{Fie2006,Hla2014b,MooKea2009,Neu1990}.
Now, if 
$$
\ovr{\imace{A}_I^T\ivr{u}} \leq \unum{c}_I,
$$
where the left-hand side is evaluated by interval arithmetic, then we are sure that \nref{optCondDual} has a solution in $\ivr{u}$ for each realization of interval data, and therefore the optimality criterion is satisfied.

If $x^*$ is a basic degenerate solution, we can adopt a sufficient condition for checking similar kind of robust feasibility of mixed system of equations and inequalities proposed recently in \cite{Hla2013b}. We will briefly recall the method.
First, solve the linear program
\begin{align*}
\max\alpha\st
(\Mid{A_I})^Tu+\alpha e \leq \Mid{c_I},\ \ (\Mid{A_J})^Tu = \Mid{c_J},
\end{align*}
where $e$ is the all-one vector. Let $u^*$ be its optimal solution. Let $B$ be an orthogonal basis of the null space of $(\Mid{A_J})^T$ and put $d:=Bu^*$. 
Now, compute an enclosure $\ivr{u}\in\IR^m$ of the solutions set 
$$
\{u\in\R^m\mmid  \exists A_J\in\imace{A}_J\exists c_J\in\ivr{c}_J:
A_J^Tu = c_J,\  Bu=d\}.
$$
Finally, if 
$$
\ovr{\imace{A}_I^T\ivr{u}} \leq \unum{c}_I,
$$
the the optimality criterion is satisfied.

\subsection{Seeking for a candidate}\label{ssCand}

If we are not given a candidate vector $x^*$ for a robust optimal solution, then it may be a computationally difficult problem to find a robust optimal solution or to prove that there is no one. Below, we propose a simple heuristic for finding a promising candidate.

A candidate should be robustly feasible. 
The condition \nref{optFeas} is can be rewritten in a linear form as
\begin{align*}
(\Mid{A}x^*-\Mid{b})+\Rad{A}x^*\leq\Rad{b},\ \ 
-(\Mid{A}x^*-\Mid{b})+\Rad{A}x^*\leq\Rad{b},
\end{align*}
or, equivalently, as
\begin{align}\label{ineqSetRob}
\omace{A}x^*\leq\ovr{b},\ \ 
\umace{A}x^*\geq\uvr{b}.
\end{align}
This motivates us to find a good candidate $x^*$ as an optimal solution of the linear program
\begin{align*}
\min (\Mid{c})^Tx\st x\in\mna{F},
\end{align*}
where
\begin{align*}
\mna{F}:=\{x\in\R^n\mmid
\omace{A}x\leq\ovr{b},\ \ 
\umace{A}x\geq\uvr{b},\ \ 
x\geq0\}.
\end{align*}

\subsection{The set of robust solutions in more detail}

Let us denote by $\Ss$ the set of all robust optimal solutions.

\begin{proposition}\label{propSsEqUniConv}
$\Ss$ is formed by a union of at most $\binom{n}{\lfloor n/2\rfloor}$ convex polyhedral sets.
\end{proposition}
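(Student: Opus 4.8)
The plan is to combine the feasibility description \nref{ineqSetRob} with the optimality analysis from the case decomposition \nref{optAeExp}. Recall that $x^*$ is robustly optimal precisely when it is robustly feasible and, for every sign vector $s\in\{\pm1\}^{|J|}$, the system \nref{optAeExp} is infeasible. The subtlety is that the index set $I=I(x^*)$ of active (zero) coordinates, and hence $J$ and $|J|$, depends on $x^*$, so $\Ss$ is not literally carved out by a single finite family of inequalities. My first step is therefore to stratify $\Ss$ according to the active set: for each subset $I\subseteq\seznam{n}$ let $\Ss_I$ be the set of robust optimal $x^*$ with active index set exactly $I$, so that $\Ss=\bigcup_{I\subseteq\seznam{n}}\Ss_I$.

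Next I would show that each $\Ss_I$ is convex, or at least is contained in a convex polyhedral set that still lies inside $\Ss$. Fix $I$, so $J=\seznam{n}\setminus I$ is fixed and $x_I=0$, while the free part $x_J$ satisfies $x_J\geq0$ with strict positivity describing the interior of the stratum. On this stratum the absolute value $|x_J|$ equals $x_J$, so in the systems \nref{optAeExp} each term $\Rad{c_J}\diag(s)x_J$ and $\Rad{A_J}\diag(s)x_J$ becomes linear once we substitute $|x_J|=x_J$; equivalently the only sign pattern that can ever be realized on the stratum is $s=e$ (the all-ones vector). Thus on $\Ss_I$ the optimality requirement collapses to the infeasibility of the single linear system obtained from \nref{optAeExp} with $s=e$, together with the linear feasibility inequalities \nref{ineqSetRob} restricted to $x_I=0$. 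The key point is that ``the linear system in $x$ with data depending linearly on $x^*$ is infeasible'' is, by LP duality (Farkas, as used around \nref{optCondDual}), equivalent to feasibility of a dual system whose solution set, intersected over all realizations, is again polyhedral in $x^*$; hence $\Ss_I$ is a polyhedral (in particular convex) set. Here I can lean directly on the sufficient-condition discussion: the optimality condition for fixed active set is exactly robust feasibility of the mixed system \nref{optCondDual}, which is known to be polyhedrally describable.

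The final step is the counting. A priori this gives $2^n$ strata, but I would reduce to $\binom{n}{\lfloor n/2\rfloor}$ by the following absorption argument. If $x^*\in\Ss_I$ and $x^*\in\Ss_{I'}$ with $I\subseteq I'$, then both lie in the closed polyhedron associated with the smaller active set $I$ (shrinking the active set only relaxes the constraint $x_i\geq0,\ i\in I$ and keeps $|x_J|=x_J$ valid), so the stratum $\Ss_{I'}$ is already covered by the closure of the $\Ss_I$-polyhedron intersected with $\Ss$. Iterating, it suffices to keep one polyhedron per index set in an antichain of $2^{\seznam{n}}$ under inclusion; by Sperner's theorem the largest antichain has size $\binom{n}{\lfloor n/2\rfloor}$. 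More carefully, I would argue that every $x^*\in\Ss$ lies in the closed polyhedral set $P_I$ built from some $I$, and that the collection $\{P_I\cap\Ss\}$ can be thinned to an antichain-indexed subfamily still covering $\Ss$; then $\Ss=\bigcup_{I\in\mathcal{A}}(P_I\cap\Ss)$ with $|\mathcal{A}|\leq\binom{n}{\lfloor n/2\rfloor}$, and since each $P_I\cap\Ss$ is convex (being $P_I$ itself, once one checks $P_I\subseteq\Ss$), we are done.

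The main obstacle I anticipate is the bookkeeping around degenerate points, i.e.\ points lying on the boundary between strata: one must make sure that a robust optimal $x^*$ with active set $I$ is genuinely contained in the \emph{closed} polyhedron $P_{I'}$ for every $I'\subseteq I$ and that $P_{I'}\subseteq\Ss$, so that the union over an antichain really recaptures all of $\Ss$ and each piece is entirely robust optimal rather than merely containing robust points. Verifying $P_I\subseteq\Ss$ requires checking that the $s=e$ reduction of \nref{optAeExp} is the only relevant one on the closure of the stratum, which is where the condition $|x_J|=x_J$ (valid because $x_J\geq0$ on $P_I$) does the work; I would state this carefully as the crux lemma and then the Sperner bound is immediate.
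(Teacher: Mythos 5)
There is a genuine gap, and it comes from conflating two different objects both written $x$: the candidate point $x^*$ and the variable of the optimality system \nref{optCond}/\nref{optAeExp}. In \nref{optCond} the variable $x$ is a direction in the tangent cone at $x^*$: only $x_I\geq0$ is imposed, and $x_J$ is sign-unrestricted (in the paper's own example a relevant solution is $x=(1,1,-2)$). Hence the absolute value $|x_J|$ appearing in the reduced system is the absolute value of this free variable, and the positivity of the \emph{candidate's} coordinates $x^*_J>0$ on a stratum does not let you substitute $|x_J|=x_J$; all $2^{|J|}$ sign vectors $s$ in \nref{optAeExp} remain relevant. Your claimed collapse to $s=e$ is therefore false, and it is exactly what you lean on twice: once to argue each stratum is polyhedral via a Farkas/dual description, and again in your ``crux lemma'' that $P_I\subseteq\Ss$ on the closure of a stratum. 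The dual-system step is also confused for another reason: given $I$, the system \nref{optCondDual} does not involve $x^*$ at all, so there is nothing ``polyhedral in $x^*$'' to extract from it.

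The repair is simpler than what you attempt, and it is the paper's route: the optimality criterion (infeasibility of \nref{optCond} for all $A\in\imace{A}$, $c\in\ivr{c}$) depends on $x^*$ only through the active set $I$, so each stratum $\mna{F}\cap\{x\mmid x_i=0,\ i\in I,\ x_i>0,\ i\notin I\}$ lies entirely in $\Ss$ or is disjoint from it; convexity of the pieces then comes for free from $\mna{F}\cap\{x\mmid x_I=0\}$ being a polyhedron, not from any dual argument. Monotonicity, $I\subseteq I'$ and stratum $I$ robust $\Rightarrow$ stratum $I'$ robust, holds because enlarging $I$ only adds constraints $x_i\geq0$ to \nref{optCond}, preserving infeasibility; this is what legitimately lets you pass to the closed sets $\tilde{\mna{F}}_I$ and justifies $P_I\subseteq\Ss$. (Note also that your absorption step as phrased, ``$x^*\in\Ss_I$ and $x^*\in\Ss_{I'}$,'' is vacuous since strata with distinct exact active sets are disjoint.) Your final Sperner count over an antichain of minimal index sets is correct once these pieces are in place.
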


\begin{proof}
Each $x\in\Ss$ must lie in $\mna{F}$ and must satisfy the optimality criterion. Since the optimality criterion does not depend directly on $x$, but only on the active set $I$ of $x$, we have that
\begin{align*}
\mna{F}_I:=\mna{F}\cap
 \{x\in\R^n\mmid x_i=0,\ i\in I,\ x_i>0,\ i\not\in I\}
\end{align*}
either whole lies in $\Ss$, or is disjoint with $\Ss$. Hence $\Ss$ is formed by a union of the sets $\mna{F}_I$ for several index sets $I\subseteq \seznam{n}$. Since 
\begin{align*}
\mna{F}_I\subseteq\Ss\wedge I\subseteq I'\ 
\Rightarrow\ \mna{F}_{I'}\subseteq\Ss,
\end{align*}
we can replace the sets $\mna{F}_I$ by
\begin{align*}
\tilde{\mna{F}}_I:=\mna{F}\cap
 \{x\in\R^n\mmid x_i=0,\ i\in I\}.
\end{align*}
Now, since $\tilde{\mna{F}}_I\supseteq\tilde{\mna{F}}_{I'}$ for $I\subseteq I'$, not all subsets of $\seznam{n}$ have to be taken into account. By Sperner's theorem (see, e.g., \cite{MatNes2008}), only  $\binom{n}{\lfloor n/2\rfloor}$ of them it is sufficient to consider.
\end{proof}

As illustrated by the following example, the robust solution set $\Ss$ needn't be topologically connected.

\begin{example}
Consider the problem
\begin{align*}
\min x_1+x_2+c_3x_3
\st x_1+x_2+x_3=1,\ x_1-x_2=b_2,\ x_1,x_2,x_3\geq0,
\end{align*}
where $c_3\in[0.5,1.5]$ and $b_2\in[-1,1]$. The robust feasible set $\mna{F}$ is formed by a triangle with vertices $(1,0,0)$, $(0,1,0)$ and $(0,0,1)$.
Concerning optimality, the system \nref{optCond} reads
\begin{align}\label{sysRobOptEx}
x_1+x_2+c_3x_3=-1,\ \ 
x_1+x_2+x_3=0,\ \ 
x_1-x_2=0,\ \ 
x_I\geq0.
\end{align}
If $3\not\in I$, then \nref{sysRobOptEx} has a solution $x=(1,1,-2)$ when $c_3=1.5$. Thus, it must be $3\in I$. If $I=\{3\}$, then \nref{sysRobOptEx} has a solution $x=(-1,-1,2)$ when $c_3=0.5$. If $I=\{1,3\}$, then \nref{sysRobOptEx} has no solution for any $c_3$, and the corresponding $\tilde{\mna{F}}_I=\{(0,1,0)\}$. Similarly, for $I=\{2,3\}$, the system \nref{sysRobOptEx} has no solution for any $c_3$, and $\tilde{\mna{F}}_I=\{(1,0,0)\}$. 
In summary, the robust solution set $\Ss$ consists of two isolated points $(1,0,0)$ and $(0,1,0)$.
\end{example}

\section{Applications}

\subsection{Transportation problem}

Since linear programming is so widely used technique, the proposed concept of robust solution and the corresponding methodology is applicable in many practical problems. These problems include transportation problem and flows in networks, among others, in which the constraint matrix $A$ represents an incidence matrix of a (undirected or directed) graph. By imposing suitable intervals ($[0,1]$ or $[-1,1]$) as the matrix entries, we can model uncertainty in the knowledge of the edge existence.

More concretely, consider a transportation problem
\begin{align*}
\min\ &\sum_{i=1}^m\sum_{j=1}^nc_{ij}x_{ij}\\
\st 
&\sum_{i=1}^m\alpha_{ij}x_{ij}=b_j,\quad j=1,\dots,n,\\
&\sum_{j=1}^n\alpha_{ij}x_{ij}=a_i,\quad i=1,\dots,m,\\
&x_{ij}\geq0,\quad  i=1,\dots,m,\ j=1,\dots,n,
\end{align*}
where $c_{ij}\in\inum{c}_{ij}$, $a_i\in\inum{a}_i$ and $b_j\in\inum{b}_j$. In contrast to the standard formulation $\alpha_{ij}\in\{0,1\}$ and in order to obtain interval parameters, we allow $\alpha_{ij}$ to attain values in the interval $[0,1]$.

Robustness here means that an optimal solution $x^*$ remains optimal for any $c_{ij}\in\inum{c}_{ij}$. Moreover, $x^*$ should also remain optimal even when some selected edges are removed. The edge removal could be compensated by a suitable change of $a_i\in\inum{a}_i$ and $b_j\in\inum{b}_j$. Herein,the intervals  $\inum{a}_i$ and $\inum{b}_j$ are interpreted as tolerances in supplies and demands.

\begin{example}
For concreteness, let
\begin{align*}
C=\begin{pmatrix}20& 30& 10\\10& 20& 50\\40& 10& 20\end{pmatrix},\quad
a=(100, 160, 250),\quad
b=(150, 210, 150).
\end{align*}
Suppose that the  objective coefficients $c_{ij}$ are known with $10\%$ precision only. Next suppose that the supplies and demands have $10\%$ tolerance in which they can be adjusted. Eventually, suppose that the connections from the second supplier to the second and third demanders, and from the third supplier to the first demander are all uncertain. Thus, we have interval data
\begin{align*}
\imace{C}&=\begin{pmatrix}
[18,22]& [27,33]& [9,11]\\
[9,11]& [18,22]& [45,55]\\
[36,44]& [9,11]& [18,22]
\end{pmatrix},\\
\ivr{a}&=([90,110],\, [144,176],\, [225,275]),\\
\ivr{b}&=([135,165],\,[189,231],\,[135,165])\\
\inum{\alpha}_{22}&=\inum{\alpha}_{23}=\inum{\alpha}_{31}=[0,1],\ \ 
\inum{\alpha}_{ij}=1,\ (i,j)\not\in\{(2,2),(2,3),(3,1)\}.
\end{align*}

For the midpoint data, the optimal solution is 
\begin{align*}
\begin{pmatrix}0& 0& 100\\150& 10& 0\\0& 200& 50\end{pmatrix}.
\end{align*}
It is robustly feasible, however, it is not robustly optimal.

Let us try our method from Section~\ref{ssCand}. It recommends to solve the problem
\begin{align*}
\min\ &\sum_{i=1}^m\sum_{j=1}^n\Mid{c}_{ij}x_{ij}\\
\st 
&\sum_{i=1}^m\onum{\alpha}_{ij}x_{ij}\leq \onum{b}_j,\ \ 
\sum_{i=1}^m\unum{\alpha}_{ij}x_{ij}\geq \unum{b}_j,
 \quad j=1,\dots,n,\\
&\sum_{j=1}^n\onum{\alpha}_{ij}x_{ij}\leq\onum{a}_i,\ \ 
\sum_{j=1}^n\unum{\alpha}_{ij}x_{ij}\geq\unum{a}_i,
 \quad i=1,\dots,m,\\
&x_{ij}\geq0,\quad  i=1,\dots,m,\ j=1,\dots,n.
\end{align*}
Its solution is
\begin{align*}
\begin{pmatrix}0& 0& 99\\144& 0& 0\\0& 189& 36\end{pmatrix}.
\end{align*}
It turns out that is is both robustly feasible and optimal, so it can serve as a robust solution in question. As the sufficient condition fails, optimality must have been verified by the exhausting feasibility checking of 16 systems of type \nref{optAeExp}. Nevertheless, if the edge $(2,2)$ becomes certain, and only the others are uncertain, then the sufficient condition succeeds.
\end{example}

\begin{example}\label{exDpNum}
We carried out a limited numerical study about what is the efficiency of the sufficient condition and the heuristic for finding a candidate. In the transportation problem with given dimensions $m$ and $n$, we randomly chosen $C$, $a$ and $b$. In $C$, there were $10\%$ of randomly chosen entries subject to $10\%$ relative uncertainty. Tolerances for supplies and demands were also $10\%$. A given number randomly selected edges were considered as uncertain, i.e., the coefficients by $x_{ij}$ ranged in $[0,1]$.

\begin{table}[t]
\caption{(Example~\ref{exDpNum}) Computing time and percentual rate of finding robust optimal solution for different dimensions and numbers of uncertain edges in the transportation problem.\label{tabDpNum}}
\begin{center}
\tabcolsep=5pt
\renewcommand\arraystretch{1.1}
\begin{tabular}{cccccc}
 \toprule
$m$ & $n$ &  $[0,1]$-edges & candidate time (in $s$)
 & robustness time (in $s$) & success rate (in \%)\\
\midrule 
\multirow{3}{*}{5} & \multirow{3}{*}{10}
   & 2 & 0.03138 & 0.01268 & 25.66 \\
  && 4 & 0.03155 & 0.00673 & 10.63 \\
  && 6 & 0.03178 & 0.00415 & 4.89 \\
 \cline{3-6}
\multirow{3}{*}{10} & \multirow{3}{*}{15}
    & 3 & 0.06980 & 0.02139 & 18.84 \\
   && 5 & 0.06904 & 0.01447 & 11.90 \\
   && 7 & 0.06873 & 0.01011 &  7.46 \\
 \cline{3-6}
\multirow{3}{*}{10} & \multirow{3}{*}{30}
    & 4 & 0.1364 & 0.02919 & 9.30 \\
   && 6 & 0.1370 & 0.02035 & 5.96 \\
   && 8 & 0.1336 & 0.01457 & 3.94 \\
 \cline{3-6}
\multirow{3}{*}{20} & \multirow{3}{*}{50}
    &  4 & 0.4737 & 0.11660 & 4.47 \\
   &&  7 & 0.4520 & 0.08784 & 2.45 \\
   && 10 & 0.4265 & 0.06585 & 1.68 \\
 \cline{3-6}
\multirow{3}{*}{30} & \multirow{3}{*}{70}
    &  6 & 1.0825 & 0.2666 & 1.04 \\
   &&  9 & 1.0119 & 0.2177 & 0.80 \\
   && 12 & 0.9657 & 0.1853 & 0.42 \\
 \cline{3-6}
\multirow{3}{*}{50} & \multirow{3}{*}{100}
    & 1 & 1.6982 & 0.04426 & 0 \\
   && 3 & 1.8670 & 0.04975 & 0 \\
   && 5 & 2.0025 & 0.08210 & 0 \\
\bottomrule
\end{tabular}
\end{center}
\end{table}

Table~\ref{tabDpNum} displays the results. Each row is a mean of 10000 runs, and shows the average running time in seconds and the success rate. The success rate measures for how many instances the heuristic found a candidate that was after that verified as a robust optimal solution by the sufficient condition. This means that the number of robust solution can be higher, but we were not able to check it because of its intractability. The displayed running time concerns both the heuristic for finding a suitable candidate and the sufficient condition for checking robustness.

The results show that in low computing time we found robust optimal solutions in 5\% to 15\% of the small dimensional cases. In large dimensions, the number of robust solutions is likely to be small. Even when we decreased the number of uncertain edges, the sufficient condition mostly failed. This may be due to 500 interval costs in the last data set.
\end{example}

\subsection{Nutrition problem}

The diet problem is the classical linear programming problem, in which a combination of $n$ different types of food must be found such that
$m$ nutritional demands are satisfied and the overall cost in minimal.
The mathematical formulation has exactly the form of \nref{lp}, where $x_j$ be the number of units of food $j$ to be consumed, $b_i$ is the required amount of nutrient $j$, $c_j$ is the price per unit of food $j$, and $a_{ij}$ is the the amount of nutrient $j$ contained in one unit of food $i$.

Since the amounts of nutricients is foods are not constant, it is reasonable to consider intervals of possible ranges instead of fixed values. The same considerations apply for the costs.
The requirements on nutritional demands to be satisfied as equations are too strict. Usually, there are large tolerances on the amount of consumed nutricients (such as calories, proteins, vitamins, etc.), which leads to quite wide intervals of admissible tolerances for the entries of $b$. In this interval valued diet problem, we would like to find optimal solution $x$ that is robustly feasible in the sense that for each possible instance of $A$ there is an admissible $b$ such that $Ax=b$. This model is exactly the robustness model we are dealing with in this paper.

\begin{example}
Consider Stigler's nutrition model \cite{Dan1963}, the GAMS model file containing the data is posted at \url{http://www.gams.com/modlib/libhtml/diet.htm}.
The problem consists of $m=9$ nutrients and $n=20$ types of food. The data in $A$ are already normalized such that it gives nutritive values of foods per dollar spent. This means that the objective is $c=(1,\dots,1)^T$.

Suppose that the entries of $A$ can vary up to $5\%$ of their nominal values, and the tolerances in $b$ are $10\%$. Then the method from Section~\ref{ssCand} finds the solution
\begin{align*}
x^*
=(&0.0256, 0.0067, 0.0429, 0, 0, 0.0015, 0.0245,\\
  &0.0108, 0, 0, 0, 0.0109, 0, 0, 0.0016, 0, 0, 0, 0, 0)^T.
\end{align*}
Even though our sufficient condition fails, it turns out by checking infeasibility of \nref{optAeExp} for each sign vector that this solution is robustly optimal. 
\end{example}

\section{General form of interval linear programming}\label{sGen}

For the sake of simplicity of exposition, we considered the equality form of linear programming \nref{lp} in the first part of this paper. It is well known in interval linear programming that different forms are not equivalent to each other \cite{Hla2012a} since transformations between the formulations lead to dependencies between interval coefficients. That is why we will consider a general form of interval linear programming in this section and extend the results developed so far.

The general form with $m$ equation, $m'$ inequalities and variables $x\in\R^n$, $y\in\R^{n'}$ reads
\begin{align}\label{lpGen}
\min c^Tx+d^Ty\st Ax+By=b,\ Cx+Dy\leq a,\ x\geq0,
\end{align}
where  $a\in\ivr{a}$, $b\in\ivr{b}$, $c\in\ivr{c}$, $d\in\ivr{d}$, $A\in\imace{A}$,  $B\in\imace{B}$,  $C\in\imace{C}$ and  $D\in\imace{D}$. Let $(x^*,y^*)$ be a candidate solution. The problem now states as follows. 
\begin{quote}
For every $c\in\ivr{c}$, $d\in\ivr{d}$, $A\in\imace{A}$, $B\in\imace{B}$,  $C\in\imace{C}$,  $D\in\imace{D}$, does there exist $a\in\ivr{a}$ and $b\in\ivr{b}$ such that $(x^*,y^*)$ is optimal to \nref{lpGen}?
\end{quote}
As before, we will study feasibility and optimality conditions separately.

\subsection{Feasibility}

Here, we have to check whether for each  $A\in\imace{A}$, $B\in\imace{B}$,  $C\in\imace{C}$ and  $D\in\imace{D}$, there are $a\in\ivr{a}$ and $b\in\ivr{b}$ such that $Ax^*+By^*=b$ and $Cx^*+Dy^*\leq a$. We can check equations and inequalities independently. Equations are dealt with in a similar manner as in Section~\ref{sEqForm}, and the sufficient and necessary condition is
\begin{align}\label{condGenFeasEq}
|\Mid{A}x^*+\Mid{B}y^*-\Mid{b}|+\Rad{A}x^*+\Rad{B}|y^*|\leq\Rad{b}.
\end{align}
For inequalities, we have the following characterisation.

\begin{proposition}
For each $C\in\imace{C}$ and  $D\in\imace{D}$, there is $a\in\ivr{a}$ such that $Cx^*+Dy^*\leq a$ if and only if
\begin{align}\label{condGenFeasIneq}
\omace{C}x^*+\Mid{D}y^*+\Rad{D}|y^*|\leq\ovr{a}.
\end{align}
\end{proposition}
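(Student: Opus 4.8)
The statement asserts an equivalence between a forall-exists quantified feasibility condition on the inequality block and a single deterministic inequality. My plan is to eliminate the quantifiers one at a time, from the inside out. First I would fix $C \in \imace{C}$ and $D \in \imace{D}$ and observe that "there is $a \in \ivr{a}$ such that $Cx^* + Dy^* \leq a$" holds if and only if $Cx^* + Dy^* \leq \ovr{a}$, since the componentwise largest admissible right-hand side is $\ovr{a}$. Thus the quantified condition reduces to: for all $C \in \imace{C}$ and all $D \in \imace{D}$, one has $Cx^* + Dy^* \leq \ovr{a}$, i.e. $\max_{C \in \imace{C},\, D \in \imace{D}} (Cx^* + Dy^*) \leq \ovr{a}$, where the maximum is taken componentwise (and it is attained since we are optimizing a linear function over a box).

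Next I would compute that componentwise maximum explicitly. For a fixed row index $k$, the $k$th entry of $Cx^* + Dy^*$ is $C_{k*}x^* + D_{k*}y^*$. Since $x^* \geq 0$ (this is part of the problem data in \nref{lpGen}), the term $C_{k*}x^*$ is maximized over $C_{k*} \in \imace{C}_{k*}$ by taking $C_{k*} = \omace{C}_{k*}$, giving $\omace{C}_{k*}x^*$. The sign of the entries of $y^*$ is unrestricted, so the term $D_{k*}y^*$ is maximized over $D_{k*} \in \imace{D}_{k*}$ by choosing, entrywise, $\omace{D}_{kj}$ where $y^*_j \geq 0$ and $\umace{D}_{kj}$ where $y^*_j < 0$; this yields $\Mid{D}_{k*}y^* + \Rad{D}_{k*}|y^*|$ by the standard interval-arithmetic identity $\max_{D_{k*} \in \imace{D}_{k*}} D_{k*}y^* = \Mid{D}_{k*}y^* + \Rad{D}_{k*}|y^*|$. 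Since the choices of $C$ and $D$ decouple across rows and across the two terms, the componentwise maximum is exactly $\omace{C}x^* + \Mid{D}y^* + \Rad{D}|y^*|$, and requiring this to be $\leq \ovr{a}$ is precisely \nref{condGenFeasIneq}.

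Finally I would assemble the two reductions: the original quantified statement is equivalent to the maximum being $\leq \ovr{a}$, which by the explicit computation is \nref{condGenFeasIneq}. For the converse direction I would note that if \nref{condGenFeasIneq} holds, then for \emph{any} $C \in \imace{C}$ and $D \in \imace{D}$ we have $Cx^* + Dy^* \leq \omace{C}x^* + \Mid{D}y^* + \Rad{D}|y^*| \leq \ovr{a}$, so $a := \ovr{a} \in \ivr{a}$ works. I do not anticipate a genuine obstacle here: the only point requiring care is keeping the componentwise (rather than global) nature of the maximization straight and invoking $x^* \geq 0$ to pin down the maximizer of the $C$-term without an absolute value, in contrast to the $y^*$-term which produces the $\Rad{D}|y^*|$ contribution. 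This parallels the treatment of the equality block leading to \nref{condGenFeasEq}, the only difference being that inequalities need only the upper bound $\ovr{a}$ whereas equalities require a two-sided bound.
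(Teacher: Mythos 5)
Your proposal is correct and follows essentially the same route as the paper: eliminate the existential quantifier by taking $a:=\ovr{a}$, then show the componentwise worst case of $Cx^*+Dy^*$ over $C\in\imace{C}$, $D\in\imace{D}$ equals $\omace{C}x^*+\Mid{D}y^*+\Rad{D}|y^*|$ (the paper exhibits the maximizers $C=\omace{C}$, $D=\Mid{D}+\Rad{D}\diag(\sgn(y^*))$, while you invoke the standard interval maximum identity, using $x^*\geq0$ for the $C$-term just as the paper implicitly does). No gaps.
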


\begin{proof}
For each $C\in\imace{C}$ and  $D\in\imace{D}$ we have
\begin{align*}
Cx^*+Dy^*
&=Cx^*+\Mid{D}y^*+(D-\Mid{D})y^*
\leq Cx^*+\Mid{D}y^*+|D-\Mid{D}||y^*|\\
&\leq \omace{C}x^*+\Mid{D}y^*+\Rad{D}|y^*|.
\end{align*}
This inequality chain holds as equation for $C:=\omace{C}$ and $D:=\Mid{D}+\Rad{D}\diag(\sgn(y^*))$ since $|y^*|=\diag(\sgn(y^*))y^*$. That is, the largest value of the left-hand side is attained for this setting. Therefore, feasibility condition holds true if and only if the inequality is satisfied for this setting of $C$ and $D$, and for the largest possible right-hand side vector $a:=\ovr{a}$.
\end{proof}

\subsection{Optimality} 

For checking optimality we have to define the active set first. For nonnegativity constraints, we can use the standard definition $I:=\{i=1,\dots,n\mmid x^*_i=0\}$. However, we face a problem to define the active set for the other inequalities due to the variations in $C$ and $D$. Fortunately, we can define it as follows.

\begin{proposition}\label{propGenOpt}
Each instance of the inequality system $Cx^*+Dy^*\leq a$, $C\in\imace{C}$, $D\in\imace{D}$, with a suitable $a\in\ivr{a}$ includes the index set
\begin{align*}
K:=\{k\mmid 
\umace{C}_{k*}x^*+\Mid{D}_{k*}y^*-\Rad{D}_{k*}|y^*|\geq \unum{a}_k\}
\end{align*}
as a subset of its active set. 
Moreover, $K$ is attained as an active set for $C:=\umace{C}$, $D:=\Mid{D}-\Rad{D}\diag(\sgn(y^*))$ and $a:=\max\{\uvr{a},Cx^*+Dy^*\}$.
\end{proposition}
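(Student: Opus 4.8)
The plan is to verify two things: first, that the index set $K$ is contained in the active set of \emph{every} admissible instance (i.e.\ for every $C\in\imace{C}$, $D\in\imace{D}$ there exists a feasible $a\in\ivr{a}$ so that rows in $K$ are tight), and second, that the particular choice of $C$, $D$, $a$ stated in the proposition realizes $K$ \emph{exactly} as its active set. The guiding idea, exactly as in the proof of the preceding proposition and in the feasibility condition \nref{condGenFeasEq}, is to track the extremal values of the row expression $C_{k*}x^*+D_{k*}y^*$ using $x^*\geq0$ and $|y^*|=\diag(\sgn(y^*))y^*$.

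First I would record, for each row index $k$, the range of the scalar $C_{k*}x^*+D_{k*}y^*$ over $C\in\imace{C}$, $D\in\imace{D}$. Since $x^*\geq0$, the contribution of the $C$-part is minimized at $C_{k*}:=\umace{C}_{k*}$, and writing $D_{k*}=\Mid{D}_{k*}+(D_{k*}-\Mid{D}_{k*})$ with $|D_{k*}-\Mid{D}_{k*}|\leq\Rad{D}_{k*}$ gives
\begin{align*}
C_{k*}x^*+D_{k*}y^*\ \geq\ \umace{C}_{k*}x^*+\Mid{D}_{k*}y^*-\Rad{D}_{k*}|y^*|=:\ell_k,
\end{align*}
and this lower bound $\ell_k$ is attained for $C_{k*}:=\umace{C}_{k*}$ and $D_{k*}:=\Mid{D}_{k*}-\Rad{D}_{k*}\diag(\sgn(y^*))$, because then $(D_{k*}-\Mid{D}_{k*})y^*=-\Rad{D}_{k*}\diag(\sgn(y^*))y^*=-\Rad{D}_{k*}|y^*|$. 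Now if $k\in K$, then $\ell_k\geq\unum{a}_k$, so for \emph{any} instance $C$, $D$ we have $C_{k*}x^*+D_{k*}y^*\geq\ell_k\geq\unum{a}_k$; since feasibility forces $a_k\geq C_{k*}x^*+D_{k*}y^*$ while $a_k\leq\onum{a}_k$ is the only other constraint on $a_k$, the unique way to keep row $k$ feasible is to push $a_k$ up to at least $C_{k*}x^*+D_{k*}y^*$, and in particular the smallest feasible choice makes that row tight. This shows $K$ lies in the active set of every instance, i.e.\ the active set is well-defined as claimed.

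For the second part, take $C:=\umace{C}$, $D:=\Mid{D}-\Rad{D}\diag(\sgn(y^*))$, so that row $k$ of $Cx^*+Dy^*$ equals exactly $\ell_k$; and take $a:=\max\{\uvr{a},Cx^*+Dy^*\}$ entrywise, which is admissible since $Cx^*+Dy^*\leq\omace{C}x^*+\Mid{D}y^*+\Rad{D}|y^*|\leq\ovr{a}$ by the feasibility condition \nref{condGenFeasIneq} (here I use that $(x^*,y^*)$ is a robustly feasible candidate). For this $a$, row $k$ is tight exactly when $a_k=(Cx^*+Dy^*)_k=\ell_k$, i.e.\ when $\ell_k\geq\unum{a}_k$, which is precisely the defining condition of $K$. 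Hence the active set for this instance is exactly $K$.

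I do not expect a serious obstacle here — the argument is a routine "extremal realization" computation of the same flavour as the proof of the previous proposition. The one point that needs a little care is the logical structure of the first part: one must phrase "$K$ is a subset of the active set" correctly, namely that for every $C,D$ one can (and in the minimal-$a$ sense, must) choose $a\in\ivr{a}$ so that the rows indexed by $K$ are active, rather than claiming $K$ equals the active set for every instance. The second, mild point is to make sure the chosen $a=\max\{\uvr{a},Cx^*+Dy^*\}$ indeed stays below $\ovr{a}$, which is exactly where the robust feasibility hypothesis \nref{condGenFeasIneq} enters.
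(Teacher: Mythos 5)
Your argument is correct and is essentially the paper's own proof: the same extremal choices $C:=\umace{C}$, $D:=\Mid{D}-\Rad{D}\diag(\sgn(y^*))$, the same lower bound $\umace{C}_{k*}x^*+\Mid{D}_{k*}y^*-\Rad{D}_{k*}|y^*|\leq C_{k*}x^*+D_{k*}y^*$, and the same use of robust feasibility to place $a_k:=C_{k*}x^*+D_{k*}y^*$ (resp.\ $a:=\max\{\uvr{a},Cx^*+Dy^*\}$) inside $\ivr{a}$, with the two parts merely presented in the opposite order. No gaps.
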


\begin{proof}
First, we show that $K$ is attained for $C:=\umace{C}$, $D:=\Mid{D}-\Rad{D}\diag(\sgn(y^*))$, and $a:=\max\{\uvr{a},Cx^*+Dy^*\}\in\ivr{a}$.
The condition $Cx^*+Dy^*\leq\ovr{a}$, and hence also $a\in\ivr{a}$, follows from feasibility of $(x^*,y^*)$, so $a$ is well defined.
For $k\in K$, we have
\begin{align*}
{C}_{k*}x^*+{D}_{k*}y^*
=\umace{C}_{k*}x^*+\Mid{D_{k*}}y^*-\Rad{D_{k*}}|y^*|
\geq \unum{a}_k,
\end{align*}
whence ${C}_{k*}x^*+{D}_{k*}y^*=a_k$. For $k\not\in K$, we have
\begin{align*}
{C}_{k*}x^*+{D}_{k*}y^*
=\umace{C}_{k*}x^*+\Mid{D_{k*}}y^*-\Rad{D_{k*}}|y^*|
< \unum{a}_k=a_k.
\end{align*}

Now, let $C\in\imace{C}$, $D\in\imace{D}$ and $k\in K$ be arbitrary. From the feasibility of $(x^*,y^*)$ and
\begin{align*}
\unum{a}_k
\leq\umace{C}_{k*}x^*+\Mid{D_{k*}}y^*-\Rad{D_{k*}}|y^*|
\leq{C}_{k*}x^*+{D}_{k*}y^*
\end{align*}
we can put $a_k:={C}_{k*}x^*+{D}_{k*}y^*\in\inum{a}_k$. Therefore, $k$ lies in the active set corresponding to $C$, $D$ and $a$.
\end{proof}

Notice that the larger the active set the better since we have more constraints in the optimality criterion and the solution is more likely optimal.
Proposition~\ref{propGenOpt} says that we can take $K$ as the active set to the interval inequalities. Since for each $C\in\imace{C}$ and $D\in\imace{D}$, this $K$ is the smallest active set, it is the worst case scenario that we can imagine. Similarly, the right-hand side vector $a$ from Proposition~\ref{propGenOpt} is the best response: If we decrease it, then $(x^*,y^*)$ or $a$ becomes infeasible, and if we increase it, then the active set becomes smaller. 

To state the optimality criterion comprehensively, we have to introduce some notation first. Let $\tilde{A}:=A_I$, $\tilde{B}:=(A_J\mid B)$, $\tilde{c}:=c_I$, $\tilde{d}:=(c_J, d)$, $\tilde{x}:=x_I$, $\tilde{y}:=(x_J,y)$. Let $\tilde{C}$ be the restriction of $C_I$ to the rows indexed by $K$, and similarly $\tilde{D}$ be a restriction of $(C_J\mid D)$ to the rows indexed by $K$.

For a concrete setting $a\in\ivr{a}$, $b\in\ivr{b}$, $c\in\ivr{c}$, $d\in\ivr{d}$, $A\in\imace{A}$,  $B\in\imace{B}$,  $C\in\imace{C}$ and  $D\in\imace{D}$, a feasible solution $(x^*,y^*)$ is optimal if and only if
\begin{align}\label{sysGenOpt}
\tilde{c}^T\tilde{x}+\tilde{d}^T\tilde{y}\leq-1,\ \ 
\tilde{A}\tilde{x}+\tilde{B}\tilde{y}=0,\ \ 
\tilde{C}\tilde{x}+\tilde{D}\tilde{y}\leq0,\ \ 
\tilde{x}\geq0
\end{align}
has no solution. In order that $(x^*,y^*)$ is robustly optimal, this systems has to be infeasible for each realization from the given intervals. By \cite{Hla2013b}, \nref{sysGenOpt} is infeasible for each realization if and only if the system
\begin{align*}
\uvr{\tilde{c}}^T\tilde{x}+(\Mid{\tilde{d}})^T\tilde{y}
 &\leq(\Rad{\tilde{d}})^T|\tilde{y}|-1,\\ 
\umace{\tilde{A}}\tilde{x}+\Mid{\tilde{B}}\tilde{y}
 &\leq\Rad{\tilde{B}}|\tilde{y}|,\\ 
-\omace{\tilde{A}}\tilde{x}-\Mid{\tilde{B}}\tilde{y}
 &\leq\Rad{\tilde{B}}|\tilde{y}|,\\ 
\umace{\tilde{C}}\tilde{x}+\Mid{\tilde{D}}\tilde{y}
 &\leq\Rad{\tilde{D}}|\tilde{y}|,\\
\tilde{x}&\geq0
\end{align*}
is infeasible. Even though we reduced infeasibility checking from infinitely many systems to only one, the resulting system in nonlinear. As in Section~\ref{sEqForm}, we can formulate it equivalently as infeasibility of
\begin{align*}
\uvr{\tilde{c}}^T\tilde{x}
 +(\Mid{\tilde{d}}-\Rad{\tilde{d}}\diag(s))^T\tilde{y}
 &\leq-1,\\ 
\umace{\tilde{A}}\tilde{x}
 +(\Mid{\tilde{B}}-\Rad{\tilde{B}}\diag(s))\tilde{y}
 &\leq 0,\\ 
-\omace{\tilde{A}}\tilde{x}
 -(\Mid{\tilde{B}}+\Rad{\tilde{B}}\diag(s))\tilde{y}
 &\leq 0,\\ 
\umace{\tilde{C}}\tilde{x}
 +(\Mid{\tilde{D}}-\Rad{\tilde{D}}\diag(s))\tilde{y}
 &\leq 0,\\
\tilde{x}&\geq0
\end{align*}
for every  $s\in\{\pm1\}^{n'+|J|}$. Now, we have to check infeasibility of $2^{n'+|J|}$ linear systems, which is large but finite. In case there are few sign-unrestricted variables and few positive components in $x^*$, the number fo systems can be acceptable for computation.

\subsection{Sufficient condition} 

Similarly as in Section~\ref{ssSufCond}, we can derive a sufficient condition for optimality checking. We discuss it briefly and refer to \cite{Hla2013b} for more details. By Farkas lemma, the optimality criterion holds true if and only if the dual system
\begin{align}\label{optGenCondDual}
\tilde{A}^Tu-\tilde{C}^Tv\leq \tilde{c},\ \ 
\tilde{B}^Tu-\tilde{D}^Tv  =  \tilde{d},\ \ 
v \geq 0
\end{align}
is feasible for each interval setting.
First, solve the linear program
\begin{align*}
\max\alpha\st
(\Mid{\tilde{A}})^Tu-(\Mid{\tilde{C}})^Tv+\alpha e
    \leq\Mid{\tilde{c}},\ \ 
(\Mid{\tilde{B}})^Tu-(\Mid{\tilde{D}})^Tv = \Mid{\tilde{d}},\ \ 
v \geq \alpha e.
\end{align*}
Let $(u^*,v^*)$ be its optimal solution, and let $(\hat{B}\mid-\hat{D})$ be an orthogonal basis of the null space of $((\Mid{\tilde{B}})^T\mid-(\Mid{\tilde{D}})^T)$ and put $\hat{d}:=\hat{B}u^*-\hat{D}v^*$. 
Compute an interval enclosure $(\ivr{u},\ivr{v})$ to the solution set of the square interval system
\begin{align*}
\{(u,v)\mmid \exists \tilde{B}\in\tilde{\imace{B}}
\exists \tilde{D}\in\tilde{\imace{D}}
\exists \tilde{d}\in \tilde{\imace{d}}:
\tilde{B}^Tu-\tilde{D}^Tv  =  \tilde{d},\ \ 
\hat{B}u-\hat{D}v=\hat{d}\},
\end{align*}
and check whether $\uvr{v}\geq0$ and
$$
\ovr{\tilde{\imace{A}}^T\ivr{u}-\tilde{\imace{C}}^T\ivr{v}}
\leq \tilde{\uvr{c}}.
$$
If they are satisfied, then \nref{optGenCondDual} has a solution in the set $(\ivr{u},\ivr{v})$ for each interval realization, and we can claim that optimality criterion holds true.

\subsection{Seeking for a candidate} 

Herein, we generalize the heuristic from Section~\ref{ssCand} to find a good candidate for robust optimal solution. Concerning the feasibility question, the conditions \nref{condGenFeasEq} and \nref{condGenFeasIneq} are not convenient due to their nonlinearities. Thus, we state an equivalent, linear form of feasibility testing.

\begin{proposition}
A vector $(x,y)$ is robustly feasible if and only if $y$ has the form of $y=y^1-y^2$ such that 
\begin{subequations}\label{condGenFeasProp}
\begin{align}\label{condGenFeasProp1}
\omace{A}x+\omace{B}y^1-\umace{B}y^2&\leq\ovr{b},\\ 
\label{condGenFeasProp2}
\umace{A}x+\umace{B}y^1-\omace{B}y^2&\geq\uvr{b},\\ 
\label{condGenFeasProp3}
\omace{C}x+\omace{D}y^1-\umace{D}y^2&\leq\ovr{a},\\ 
x,y^1,y^2&\geq0.
\end{align}
\end{subequations}
\end{proposition}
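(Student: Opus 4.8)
The plan is to establish the equivalence by splitting the three robust feasibility requirements (equations for $Ax+By=b$, and inequalities for $Cx+Dy\leq a$) and matching them against \nref{condGenFeasProp}. Robust feasibility means: for every $A\in\imace{A}$, $B\in\imace{B}$, $C\in\imace{C}$, $D\in\imace{D}$ there exist $a\in\ivr{a}$, $b\in\ivr{b}$ with $Ax+By=b$ and $Cx+Dy\leq a$. Since $a$ and $b$ are chosen after $A,B,C,D$, and the equations involve only $A,B,b$ while the inequalities involve only $C,D,a$, these two requirements decouple and can be treated separately, exactly as in the equality-form discussion in Section~\ref{sEqForm} and in condition \nref{condGenFeasEq}, \nref{condGenFeasIneq}.

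For the equational part, I would first invoke \nref{condGenFeasEq}: robust feasibility of the equations is equivalent to $|\Mid{A}x+\Mid{B}y-\Mid{b}|+\Rad{A}x+\Rad{B}|y|\leq\Rad{b}$ (here $x\geq0$ so $|x|=x$). The key step is to linearize the two absolute values. For the term $\Rad{B}|y|$, write $y=y^1-y^2$ with $y^1,y^2\geq0$; then $|y|\leq y^1+y^2$, and for any given $y$ one may always choose the canonical decomposition $y^1=\max\{y,0\}$, $y^2=\max\{-y,0\}$ for which $|y|=y^1+y^2$. Splitting the outer absolute value $|{\cdot}|\leq\Rad{b}$ into the two-sided inequality $-\Rad{b}\leq \Mid{A}x+\Mid{B}y-\Mid{b}+(\text{correction})\leq\Rad{b}$ and substituting $\Mid{B}y=\Mid{B}y^1-\Mid{B}y^2$, $\Rad{B}|y|\leq\Rad{B}y^1+\Rad{B}y^2$, I would regroup using $\Mid{B}\pm\Rad{B}=\omace{B}$ or $\umace{B}$ and $\Mid{A}\pm\Rad{A}=\omace{A}$ or $\umace{A}$. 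This yields exactly \nref{condGenFeasProp1} and \nref{condGenFeasProp2}. Conversely, given $y^1,y^2\geq0$ satisfying \nref{condGenFeasProp1}--\nref{condGenFeasProp2} with $y=y^1-y^2$, one recovers $|\Mid{A}x+\Mid{B}y-\Mid{b}|+\Rad{A}x+\Rad{B}|y|\leq\Rad{b}$ because $|y|\leq y^1+y^2$ makes the left-hand side only smaller, so \nref{condGenFeasEq} holds and robust feasibility of the equations follows. For the inequality part, I would similarly start from \nref{condGenFeasIneq}, namely $\omace{C}x+\Mid{D}y+\Rad{D}|y|\leq\ovr{a}$, substitute $y=y^1-y^2$ and $|y|\leq y^1+y^2$, and regroup $\Mid{D}+\Rad{D}=\omace{D}$ on the $y^1$ part and $-\Mid{D}+\Rad{D}=-\umace{D}$ on the $y^2$ part, obtaining \nref{condGenFeasProp3}; the converse again uses $|y|\leq y^1+y^2$.

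The main obstacle, and the point deserving care, is the direction of the equivalence in which one is handed a general decomposition $y=y^1-y^2$ with $y^1,y^2\geq0$ rather than the canonical one. In that case $|y|\leq y^1+y^2$ only, so one must check that \nref{condGenFeasProp} with such a decomposition still implies \nref{condGenFeasEq} and \nref{condGenFeasIneq}; this works precisely because increasing the "radius weights" from $|y|$ to $y^1+y^2$ makes the necessary-and-sufficient conditions \emph{harder} to satisfy, so satisfying the harder linear system implies the easier nonlinear one. In the forward direction one simply exhibits the canonical decomposition, for which $y^1+y^2=|y|$ and the reasoning is an exact equality-preserving rearrangement. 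Thus the proposition follows by combining the equational characterization \nref{condGenFeasEq}, the inequality characterization \nref{condGenFeasIneq}, the decoupling of the two robust feasibility requirements, and the elementary identities $\Mid{M}\pm\Rad{M}\in\{\umace{M},\omace{M}\}$ together with $|y|=y^1+y^2$ (forward) or $|y|\leq y^1+y^2$ (backward).
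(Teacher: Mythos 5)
Your proposal is correct and follows essentially the same route as the paper: the canonical split $y^1=y^+$, $y^2=y^-$ together with the identities $\Mid{M}\pm\Rad{M}\in\{\umace{M},\omace{M}\}$ turns \nref{condGenFeasEq} and \nref{condGenFeasIneq} into \nref{condGenFeasProp}, and $|y|\leq y^1+y^2$ takes care of an arbitrary decomposition. The only minor difference is in the sufficiency direction, where the paper bounds $Ax+B(y^1-y^2)$ and $Cx+D(y^1-y^2)$ directly by the extremal expressions in \nref{condGenFeasProp} instead of passing back through the nonlinear characterizations, but your monotonicity argument (enlarging $|y|$ to $y^1+y^2$ only strengthens the conditions, since $\Rad{B},\Rad{D}\geq0$) is equally valid.
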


\begin{proof}
Let $(x,y^1,y^2)$ be a solution to \nref{condGenFeasProp}. For any $A\in\imace{A}$ and $B\in\imace{B}$ we have
\begin{align*}
Ax+B(y^1-y^2)\leq\omace{A}x+\omace{B}y^1-\umace{B}y^2&\leq\ovr{b},
\end{align*}
and
\begin{align*}
Ax+B(y^1-y^2)\geq\umace{A}x+\umace{B}y^1-\omace{B}y^2&\geq\uvr{b},
\end{align*}
whence $Ax+B(y^1-y^2)\in\ivr{b}$. For any  $C\in\imace{C}$ and  $D\in\imace{D}$ we have
\begin{align*}
Cx+D(y^1-y^2)\leq\omace{C}x+\omace{D}y^1-\umace{D}y^2&\leq\ovr{a},
\end{align*}
so $(x,y^1-y^2)$ is robustly feasible.

Conversely, let $(x,y)$ be robustly feasible. Put $y^1:=y^+$ and $y^2:=y^-$, the positive and negative parts of $y$. From \nref{condGenFeasEq}, we derive
\begin{align*}
|\Mid{A}x+\Mid{B}(y^1-y^2)-\Mid{b}|+\Rad{A}x+\Rad{B}(y^1+y^2)
\leq\Rad{b}.
\end{align*}
This inequality gives rise to two linear inequalities:
\begin{align*}
\Mid{A}x+\Mid{B}(y^1-y^2)-\Mid{b}+\Rad{A}x+\Rad{B}(y^1+y^2)
&\leq\Rad{b},\\
-\Mid{A}x-\Mid{B}(y^1-y^2)+\Mid{b}+\Rad{A}x+\Rad{B}(y^1+y^2)
&\leq\Rad{b},
\end{align*}
which are equivalent to \nref{condGenFeasProp1}--\nref{condGenFeasProp2}. Similarly, \nref{condGenFeasIneq} implies
\begin{align*}
\omace{C}x+\Mid{D}(y^1-y^2)+\Rad{D}(y^1+y^2)\leq\ovr{a},
\end{align*}
which is equivalent to \nref{condGenFeasProp3}.
\end{proof}

Now, we recommend to take as a candidate solution the pair $(x^*,y^{*1}-y^{*2})$, where $(x^*,y^{*1},y^{*2})$ is an optimal solution of the linear program
\begin{align*}
\min (\Mid{c})^Tx+(\Mid{d})^Ty^1-(\Mid{d})^Ty^2
\st (\ref{condGenFeasProp}).
\end{align*}

\subsection{The set of robust solutions in more detail}

As before, we denote by $\Ss$ the set of all robust optimal solutions and state to following topological results on it.

\begin{proposition}\label{propSsGenUniConv}
$\Ss$ is formed by a union of at most $\binom{m'}{\lfloor m'/2\rfloor}\binom{n}{\lfloor n/2\rfloor}$ convex polyhedral sets.
\end{proposition}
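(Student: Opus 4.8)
The plan is to mimic the proof of Proposition~\ref{propSsEqUniConv}, but now tracking the two independent combinatorial sources of case splitting: the active set $I\subseteq\seznam{n}$ coming from the nonnegativity constraints $x\geq0$, and the active set $K\subseteq\seznam{m'}$ coming from the interval inequalities $Cx+Dy\leq a$. The key structural fact, already established above, is that the optimality criterion \nref{sysGenOpt} depends on the candidate $(x^*,y^*)$ only through the combinatorial data $(I,K)$, not through the actual coordinates; and Proposition~\ref{propGenOpt} tells us precisely which $K$ is forced, namely the smallest possible active set over all realizations of $C,D$. So the first step is to partition $\Ss$ according to the pair $(I,K)$ realized by a robust solution $(x^*,y^*)$: define, for index sets $I\subseteq\seznam{n}$ and $K\subseteq\seznam{m'}$,
\begin{align*}
\mna{F}_{I,K}:=\mna{F}\cap\{(x,y)\mmid x_i=0\ (i\in I),\ x_i>0\ (i\not\in I),\ k\in K \text{ is exactly the forced active set}\},
\end{align*}
where $\mna{F}$ is the robust feasible set described by \nref{condGenFeasProp}. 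Each $\mna{F}_{I,K}$ either lies wholly in $\Ss$ or is disjoint from it, since membership in $\Ss$ is determined by feasibility (which holds throughout $\mna{F}$) together with the optimality criterion, and the latter is constant on $\mna{F}_{I,K}$.

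Next I would introduce the relaxed sets $\tilde{\mna{F}}_{I,K}:=\mna{F}\cap\{(x,y)\mmid x_i=0\ (i\in I)\}\cap\{(x,y)\mmid \text{the forced constraints } K \text{ are active}\}$, and observe the monotonicity in both arguments: enlarging $I$ shrinks the face $\{x_i=0,\ i\in I\}$, and enlarging $K$ shrinks the set of $(x,y)$ whose forced active set contains $K$; moreover, by the argument in Proposition~\ref{propSsEqUniConv} (the implication $\mna{F}_I\subseteq\Ss\wedge I\subseteq I'\Rightarrow\mna{F}_{I'}\subseteq\Ss$), and by the remark after Proposition~\ref{propGenOpt} that a larger active set only helps optimality, we get the analogous monotonicity: if $\tilde{\mna{F}}_{I,K}\subseteq\Ss$ and $I\subseteq I'$, $K\subseteq K'$, then $\tilde{\mna{F}}_{I',K'}\subseteq\Ss$. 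Hence $\Ss$ is a union of sets $\tilde{\mna{F}}_{I,K}$ ranging over an antichain in $I$ and, independently, an antichain in $K$ — more precisely, it suffices to take $(I,K)$ ranging over a set that is an antichain in each coordinate of the product poset $2^{\seznam{n}}\times 2^{\seznam{m'}}$. Each $\tilde{\mna{F}}_{I,K}$ is a convex polyhedron, being the intersection of the polyhedron $\mna{F}$ with the coordinate subspace $\{x_i=0,\ i\in I\}$ and with the hyperplanes making the rows in $K$ tight.

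Finally I would bound the number of such sets. Applying Sperner's theorem separately to the two coordinates: the number of index sets $I$ appearing in a coordinatewise antichain is at most $\binom{n}{\lfloor n/2\rfloor}$, and the number of index sets $K$ is at most $\binom{m'}{\lfloor m'/2\rfloor}$, so the product gives the claimed bound $\binom{m'}{\lfloor m'/2\rfloor}\binom{n}{\lfloor n/2\rfloor}$. The one point that needs care — and I expect it to be the main obstacle — is making the "constant on $\tilde{\mna{F}}_{I,K}$" and monotonicity claims watertight for the $K$-component: unlike the crisp equality case, the active set of the interval inequalities is itself a worst-case object (Proposition~\ref{propGenOpt}), so one must verify that "the forced active set contains $K$" is genuinely a polyhedral condition in $(x,y)$ and that enlarging the nominal active index set $K$ used in the optimality criterion \nref{sysGenOpt} never destroys infeasibility of that system (it adds rows $\tilde{C}\tilde x+\tilde D\tilde y\leq0$, hence only shrinks the solution set, so infeasibility is preserved — this is exactly the remark following Proposition~\ref{propGenOpt}). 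Once that monotonicity is nailed down, the double application of Sperner is routine.
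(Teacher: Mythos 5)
Your overall route is the same as the paper's: partition $\Ss$ according to the exact pair of active sets $(I,K)$, use that the optimality criterion \nref{sysGenOpt} depends on the point only through $(I,K)$ and is monotone (enlarging $I$ or $K$ only adds constraints to the system whose infeasibility is required, as in the remark after Proposition~\ref{propGenOpt}), pass to the relaxed sets where the active sets merely contain $I$ and $K$, and then count with Sperner's theorem. Two steps, however, are weaker than in the paper's proof. First, polyhedrality of the pieces: the $K$-part of the description is $\umace{C}_{k*}x+\Mid{D}_{k*}y-\Rad{D}_{k*}|y|\geq \unum{a}_k$, which because of the term $|y|$ is \emph{not} a ``hyperplane making the row tight''. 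You correctly flag that polyhedrality of this condition needs verification, but you never supply the argument; the paper does, by rewriting it as
\begin{align*}
\umace{C}_{k*}x+\Mid{D}_{k*}y-\Rad{D}_{k*}z\geq \unum{a}_k,\ \ z\geq y,\ \ z\geq-y
\end{align*}
(legitimate since $\Rad{D}\geq0$, so the optimal lifting is $z=|y|$) and projecting the resulting polyhedron back to the $(x,y)$-space. Convexity alone is clear because $-\Rad{D}_{k*}|y|$ is concave, but polyhedrality needs this lifting-and-projection step, and it belongs in the proof.

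Second, the counting. What your minimality argument actually gives is that the retained pairs $(I,K)$ form an antichain in the \emph{product order} on $2^{\seznam{n}}\times2^{\seznam{m'}}$, not ``an antichain in each coordinate'': two minimal pairs can have comparable $I$-components provided their $K$-components are incomparable, and vice versa, so neither projection need be an antichain. Hence ``applying Sperner separately to the two coordinates'' is not justified by what you proved; the product poset is isomorphic to the Boolean lattice on $n+m'$ elements, whose maximum antichain has size $\binom{n+m'}{\lfloor (n+m')/2\rfloor}$, strictly larger than $\binom{n}{\lfloor n/2\rfloor}\binom{m'}{\lfloor m'/2\rfloor}$ in general. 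To be fair, the paper itself invokes ``Sperner's theorem again'' with no more detail at exactly this point, so your argument is not weaker than the published one here; but as written your derivation of the product bound does not go through, and you should either repair this step with an argument exploiting additional structure of the admissible pairs $(I,K)$ or settle for the bound $\binom{n+m'}{\lfloor (n+m')/2\rfloor}$ that the antichain property alone yields.
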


\begin{proof}
Each $x\in\Ss$ must satisfy both the feasibility and optimality criteria. The robust feasible set is a convex polyhedral set, so we focus on the optimality issue. The optimality depends only on the active sets $I$ and $K$, not on the concrete value of $x$. Given $I$ and $K$, the corresponding set
\begin{align*}
\mna{F}\cap
 \{(x,y)\in\R^{n+n'}\mmid &x_i=0,\ i\in I,\ x_i>0,\ i\not\in I,\\ 
&\umace{C}_{k*}x+\Mid{D}_{k*}y-\Rad{D}_{k*}|y|\geq\unum{a}_k,\ 
 k\in K,\\ 
&\umace{C}_{k*}x+\Mid{D}_{k*}y-\Rad{D}_{k*}|y|<\unum{a}_k,\ 
 k\not\in K\}
\end{align*}
either is a subset of $\Ss$ or is disjoint. Since larger $I$ and $K$ preserve optimality, we can remove the strict inequalities, and $\Ss$ is formed by a union of some of the sets 
\begin{align*}
\mna{F}\cap
 \{(x,y)\in\R^{n+n'}\mmid &x_i=0,\ i\in I,\ 
\umace{C}_{k*}x+\Mid{D}_{k*}y-\Rad{D}_{k*}|y|\geq\unum{a}_k,\ k\in K\}.
\end{align*}
There are $2^{m'+n}$ possibilities to choose $I$ and $K$, but by Sperner's theorem again, only at most $\binom{m'}{\lfloor m'/2\rfloor}\binom{n}{\lfloor n/2\rfloor}$ of them are sufficient to consider. 

It remains to prove that the set of feasible solutions fulfilling the active set requirements is a convex polyhedral set. Concerning $I$, the condition $x_i=0$, $i\in I$, is obviously convex preserving. Concerning $K$, the condition
\begin{align*}
\umace{C}_{k*}x+\Mid{D}_{k*}y-\Rad{D}_{k*}|y|\geq \unum{a}_k
\end{align*}
can be reformulated as
\begin{align*}
\umace{C}_{k*}x+\Mid{D}_{k*}y-\Rad{D}_{k*}z\geq \unum{a}_k,\ \ 
z\geq y,\ \ z\geq -y.
\end{align*}
These inequalities describe a convex polyhedral set and its projection to the $x,y$-subspace is also convex polyhedral.
\end{proof}

\section{Conclusion}

We introduced a novel kind of robustness in linear programming. When showing some basic properties, some open questions raised. For example, the robust optimal solution set $\Ss$ may be disconnected, but what can be the number of components at most? Similarly, how tight is the number of convex bodies (Propositions~\ref{propSsEqUniConv} and~\ref{propSsGenUniConv}) the set $\Ss$ consists of?

\subsubsection*{Acknowledgments.} 

The author was supported by the Czech Science Foundation Grant P402/13-10660S.


\bibliographystyle{abbrv}
\bibliography{ae_ilp}

\end{document}